\newtheorem{thm}{Theorem}[section]
\newtheorem{pro}[thm]{Proposition}
\newtheorem{lem}[thm]{Lemma}
\theoremstyle{definition}
\newtheorem{rem}[thm]{Remark}       
\newtheorem{defn}[thm]{Definition}  
\newtheorem{exam}[thm]{Example}     
\newcommand{\C}{\mathbb C}
\newcommand{\D}{\mathbb D}
\newcommand{\R}{\mathbb R}
\newcommand{\Q}{\mathbb Q}
\newcommand{\N}{\mathbb N}
\newcommand{\fE}{\mathcal E}
\newcommand{\cisom}[1]{[#1]_{\tiny{isom}}}
\begin{document}

\title[Construction of the discrete hull]
 {Construction of the discrete hull for the combinatorics of a regular pentagonal tiling of the plane}

\author{Maria Ramirez-Solano}

\address{Department of Mathematics, University of Copenhagen, Universitetsparken 5,
2100 K\o benhavn \O ,
 Denmark.\newline\newline
\indent Current Address:
 Department of Mathematics and Computer Science, University of Southern Denmark,
Campusvej 55, DK-5230 Odense M, Denmark.
 }

\email{solano@imada.sdu.dk}

\keywords{combinatorial, substitution, pentagonal tiling; discrete hull construction}

\subjclass{46L55, 52C26, 52C20}

\thanks{Supported by the Danish National Research Foundation through the Centre
for Symmetry and Deformation (DNRF92), the Faculty of Science of the University of Copenhagen, and the Villum Foundation under the project ``Local and global structures of groups and their algebras".}


\begin{abstract}
The article,\emph{ A ``regular" pentagonal tiling of the plane}, by P. L. Bowers and K. Stephenson defines a conformal pentagonal tiling.
This is a tiling of the plane with remarkable combinatorial and geometric properties.
However, it doesn't have finite local complexity in any usual sense, and therefore we cannot study it with the usual tiling theory.
The appeal of the tiling is that all the tiles are conformally regular pentagons. But
conformal maps are not allowable under finite local complexity. On the
other hand, the tiling can be described completely by its combinatorial
data, which rather automatically has finite local complexity.
In this paper we give a construction  of the discrete hull just from the combinatorial data.
The main result of this paper is that the discrete hull is a Cantor space.
\end{abstract}

\maketitle

\section{\textbf{Introduction}}\label{s:Intro}
The pentagonal tiling shown in Figure \ref{f:K} is a conformal tiling of the plane, which has many interesting properties, such as self-similarity (Figure \ref{f:Kselfsimilar}).
 It has been studied by K. Stephenson, and  P. L. Bowers in \cite{StephensonBowers97}, \cite{StephensonMeomoirs}, \cite{CirclePackingStephenson}, using the theory of circle packings.  See also \cite{MRSnonFLCpentTiling}.
J. W. Cannon, W. J. Floyd, and W. R. Parry has studied this tiling  in \cite{FloydFiniteSubdivisionRules01} from the purely combinatorial point of view, meaning that the tiling is just seen as a CW-complex without a specified realization in the plane. We will refer to this CW-complex as the combinatorial tiling $K$.
In this paper, we study further the combinatorial tiling  $K$ by adapting what we can from the standard tiling theory (cf. \cite{Sadun08}).
The absence of translation (or the absence of the group of isometries) makes the construction of a discrete hull $(\Xi,d)$ for $K$ different and more complicated.
Yet, we can prove similar results as in the standard tiling theory:
By Proposition \ref{p:xicompact}, the hull is a compact topological space, where $d$ is an ultrametric (Proposition \ref{p:dprimeultrametric}).
In particular the hull is complete.
In Theorem \ref{t:XiCantor}, we show that $\Xi$ is a Cantor space, the main result of this article.
Thus $\Xi$ has uncountably many elements.
We construct as well a subdivision map $\omega:\Xi\to\Xi$, which is continuous, injective, but not surjective, by theorems \ref{t:omegadiscretecontinuous}, \ref{t:omegadiscreteinjective}, \ref{t:omegadiscretenotsurjective}, respectively.

This approach could be adapted to other examples, for instance to the combinatorial tilings with subdivision maps shown in Figure 1 in \cite{FloydExpansionII06} (no need to be pentagonal).

There exist several papers in the literature employing a combinatorial approach to substitutional tilings. For instance,
in \cite{HilionGeomReali13},  B\'edaride and Hilion define combinatorial substitutions, with one of the goals of realizing them in the hyperbolic plane.
In \cite{Frank08}, Frank exposes lines of research using symbolic substitutions and block substitutions. 
In \cite{Fernique10}, Fernique and Ollinger construct combinatorial tilings with strong hierarchical structure, 
while in \cite{Peyriere86},  Peyri\`ere investigates frequency of patterns.
However, none of these papers addresses the issues and questions investigated in the present work.
Indeed, the main purpose of this article is to provide a framework for constructing a groupoid $C^*$-algebra for the discrete hull, and for computing the cohomology groups of the continuous hull, \cite{RamirezSolanoPhDThesis}.
Our construction of the $C^*$-algebra depends on decoration of the tilings of the hull, introduced in Section \ref{s:decoratingK}.
Finally, we would like to make note of the fact that Stephenson and Bowers have recently started expanding this work to a more general setting, \cite{BowersI14}, \cite{BowersII14}.
\begin{figure}[htbp]
  \begin{minipage}[b]{0.48\linewidth}
  \captionsetup{width=0.8\textwidth}
    \centering
    \includegraphics[width=\linewidth]{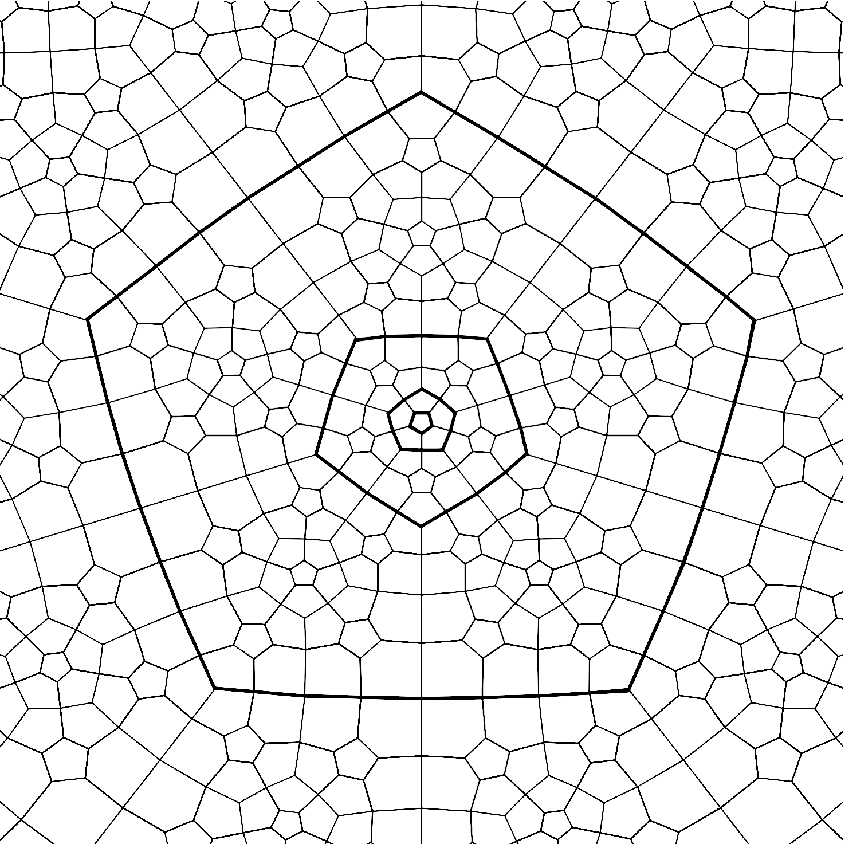}
    \caption{A conformal pentagonal tiling of the plane.}
    \label{f:K}
  \end{minipage}
  \hspace{0.2cm}
  \begin{minipage}[b]{0.48\linewidth}
  \captionsetup{width=0.8\textwidth}
    \centering
    \includegraphics[width=\linewidth]{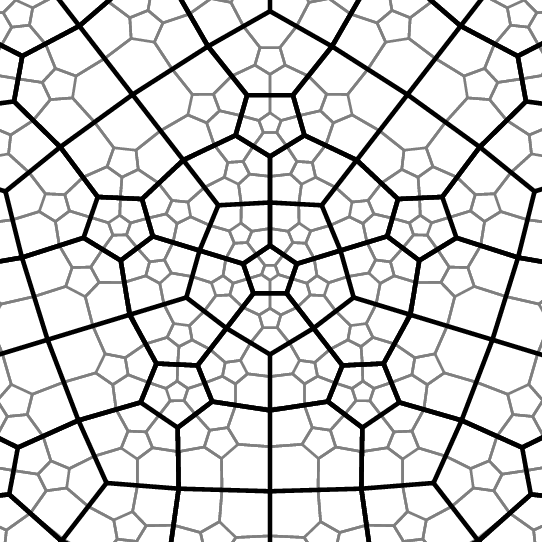}
    \caption{Selfsimilarity.\\$ $}
    \label{f:Kselfsimilar}
  \end{minipage}
\end{figure}

\section{\textbf{Combinatorial tilings}}\label{s:1CombinatorialTilings}
In this section we give the definition of combinatorial tilings coming from a subdivision rule. In particular, we give a precise definition of the combinatorial tiling $K$. Next, we show that $K$ has the so called FLC property with respect to the set of isomorphisms that are defined between subcomplexes of $K$.
We then study the so called supertiles of $K$.  After this, we redefine $K$ as a combinatorial tiling coming from a ``decorated" subdivision rule. The point of the decoration is to remove the dihedral symmetry $D_5$ of $K$. The reason for getting rid of the dihedral symmetry is so that we can construct an \'etale equivalence relation on the hull $\Xi$ and hence a $C^*$-algebra for the combinatorial tiling. See \cite{RamirezSolanoPhDThesis}, \cite{Renault80}.

A \emph{combinatorial tiling} is a 2-dimensional CW-complex  $(X,\fE)$,  such that $X$ is homeomorphic to the open unit disk $\D$, and $\fE$ a partition of $X$ satisfying the CW-complex conditions (cf. \cite{May99}).
The \emph{combinatorial tiles} (or \emph{faces}) are the closure of the 2-cells. An \emph{edge} is the closure of a 1-cell, and a \emph{vertex} is a  0-cell. We will be working with \emph{cell-preserving} maps between CW-complexes, which are  continuous maps that map cells to cells.
\begin{exam}
 If $T$ is a tiling of the plane by polygons meeting full edge to full edge, then $T$ has the structure of a 2-dimensional CW-complex, where the 2-cells are the interior of the tiles, the 1-cells are the interior of the edges of the tiles, and the 0-cells are the vertices of the edges of the tiles. Hence, under this identification, $(\C,T)$ is a combinatorial tiling.
\end{exam}

In the literature, often a patch is just a finite set of tiles. It is convenient here however that the patch is chain-connected:
\begin{defn}[patch]
  A \emph{patch} of a combinatorial tiling is a chain-connected subcomplex with finitely many cells which is the closure of its 2-cells.
\end{defn}

\begin{defn}[subdivision of a combinatorial tiling]
  Let $(X,\fE)$ and $(X,\fE')$ be two combinatorial tilings with same topological space $X$.
  We say that $(X,\fE')$ is a subdivision of $(X,\fE)$  if for each cell $e'\in \fE'$,  there is a cell $e\in\fE$ such that $e'\subset e$.
\end{defn}

\begin{figure}[h]
\captionsetup{width=0.6\textwidth}
  \centering
  \includegraphics[scale=.9]{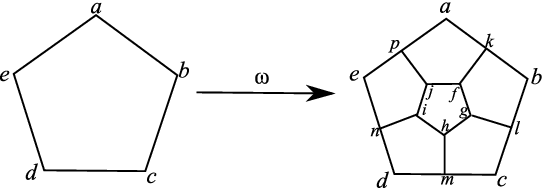}\\
  \caption{Subdivision rule for a combinatorial pentagon. }\label{f:subdivisionmapnew}
\end{figure}

\begin{defn} [pentagonal combinatorial tiling]
We say that $(X,\fE)$ is pentagonal if the closure of each 2-cell contains five 0-cells and five 1-cells.
\end{defn}

\begin{defn}[subdivision of a pentagonal tiling]\label{d:subdivisionOfAPentagonalTiling}
Given a pentagonal tiling $\fE$, we define the combinatorial tiling $\omega(\fE)$ by replacing each pentagon of $\fE$ by the rule $\omega$ shown in Figure  \ref{f:subdivisionmapnew}. More precisely,  The 0-cells of $\fE$  are 0-cells of $\omega(\fE)$.
The  1-cell $(e,a)$ from Figure \ref{f:subdivisionmapnew} subdivides into a 0-cell $p$ and two 1-cells $(e,p)$, $(p,a)$.
The 2-cell $(a,b,c,e,d)$ subdivides into five 0-cells, ten 1-cells, and six 2-cells as shown in  Figure \ref{f:subdivisionmapnew}.
\end{defn}
The \emph{subdivision of a patch of a pentagonal tiling} is defined in a similar way.

\begin{figure}[h]
  \centering
  \includegraphics[scale=.5]{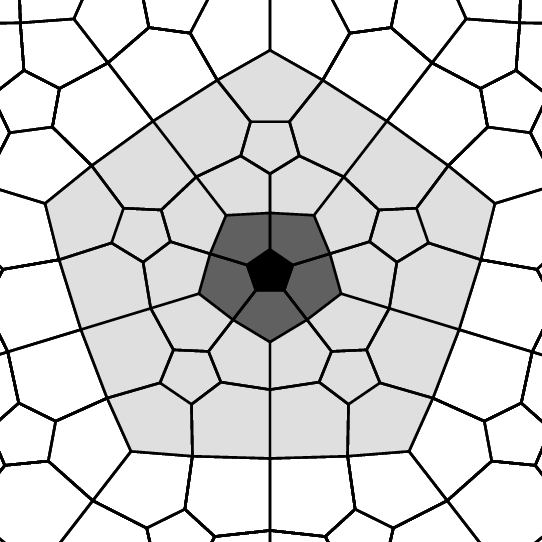}\\
  \caption{We call $K_0$ the black tile. $K_1$ is $K_0$ together with the dark gray tiles. $K_2$ is $K_1$ together with the light grey tiles. }\label{f:Kn}
\end{figure}

\begin{defn}[Superpentagon \emph{$K_n$}]\label{d:Kn}
Define $K_0$ as a combinatorial pentagon, which is a space homeomorphic to the closed unit disk with five distinguished points on its boundary.
Define $K_n:=\omega^n(K_0)$, $n\in\N_0$ where $\N_0:=\N\cup\{0\}$. See Figure \ref{f:Kn}.
Every $K_n$ has a distinguished central pentagon, namely the black pentagon shown in Figure \ref{f:Kn}. We define $\iota_n:K_n\to K_{n+1}$ as an embedding which maps the central pentagon of $K_n$ to the central pentagon of $K_{n+1}$.
\end{defn}
\begin{defn}[the combinatorial tiling \emph{$K$}]\label{d:combinatorialK}
  Define the complex $$K:=\lim_{n\to\infty} K_n,$$
as the direct limit of the sequence of the finite CW-complexes $K_n$ and embeddings $\iota_n$. It has a canonical CW-structure coming from the CW-structure of the complexes $K_n$, where $K_n$ is obtained from $K_{n-1}$ by attaching finitely many cells.
Each cell in the limit $K$ is the image of a cell in $K_n$ for some $n$.
\end{defn}

\subsection{\underline{\sc{Properties of $K$}}}\label{s:PropertiesoftheballK}
An Euclidean tiling of the plane is said to have finite local complexity (FLC for short) if, for any ball of radius $r$, there is a finite number of patterns of diameter less than $r$, up to elements of some fixed subgroup $G$ of the isometries of the plane, usually translations. Sometimes it is isometries. For
example, the pinwheel tiling of the plane has FLC with respect to $G$ = isometries, but not $G$ = translations.
The conformal pentagonal tiling shown in Figure \ref{f:K} does not have FLC  with respect to the set of conformal isomorphisms that are defined between open subsets of the plane \cite{MRSnonFLCpentTiling}. However, by Proposition \ref{p:Kflc}, its combinatorics $K$ has FLC with respect to the set of isomorphisms that are defined between subcomplexes of $K$.

\begin{defn}[finite local complexity (FLC)]
We say that a combinatorial tiling $L$ satisfies the \emph{finite local complexity (FLC)} if for any $r>0$, there are finitely many patches of edge-diameter less than $r$ up to the  set of isomorphisms that are defined between patches of $L$.
\end{defn}

\begin{pro}\label{p:Kflc}
  The combinatorial tiling $K$ is FLC.
\end{pro}

\begin{proof}
Given $r>0$, there is clearly a bound on the number of cells of radius $r$. Hence, there exists only a finite number of combinatorial structures. Hence $K$ is $FLC$.
\end{proof}

Any two vertices of a combinatorial tiling $L$ can be joined with finite paths of edges, as $L$ is simply-connected i.e. all its vertices are interior.
The length of a path is its number of edges.
The distance between two vertices of $L$ is defined as the length of the shortest path between them. We refer to these paths by \emph{distance-paths}.
\begin{figure}[t]
\captionsetup{width=0.6\textwidth}
  \centering
  \includegraphics[scale=.66,  angle =180]{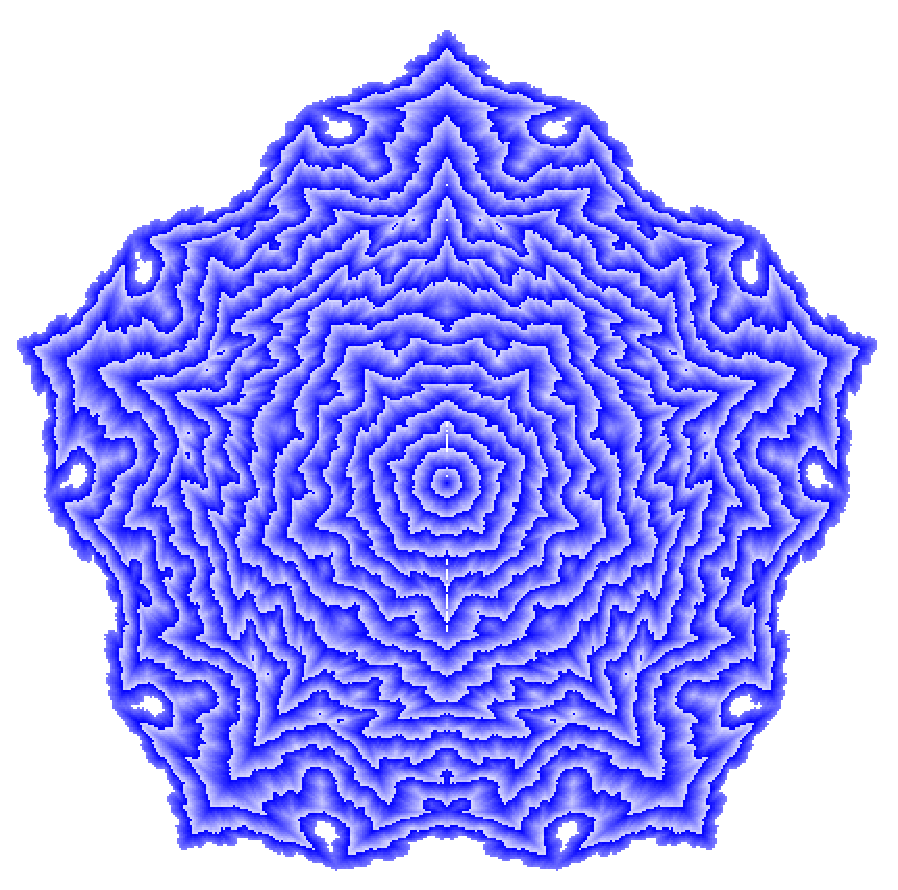}
  \caption{Concentric balls of radius $r\le 856$, where the center is the central pentagon $K_0$ of $K$. Notice the holes.}\label{f:ballholes}
\end{figure}

\begin{defn}[Ball $B(v,n,L)$]\label{d:ballonL}
  We define the ball \emph{$B(v,n,L)$} as the patch of a combinatorial tiling $L$ whose 2-cells have the property that all its vertices are within distance $n$ of the vertex $v\in L$. The closure of the 2-cells are also part of the ball.
\end{defn}

There are finitely many distinct balls $B(v,n,K)$ of radius $n\in\N$.
The \emph{boundary of  the ball $B(v,n,K)$} is defined as those edges (together with its two vertices) satisfying the condition: if $e$ is an edge of two faces $f$, $f'$, where $f$ is in the ball, and $f'$ is not in the ball, then $e$ is on the boundary of the ball.
The vertices on the boundary of the ball $B(v,n,K)$ have either distance $n$ or $n-1$ from the center.
However, all vertices of distance $n-1$ from the center are either on the boundary or inside the ball.
The vertices of distance $n$ from the center can be on the boundary, inside the ball or outside the ball (at most one unit away from the boundary).
All balls $B(v,n,K)$ are chain connected, but not necessarily simply connected. See Figure \ref{f:ballholes}. This happens simply because it is faster to go through vertices of degree 4 than vertices of degree 3.
The shortest path between two vertices goes through at least $n/2$ pentagons and at most $2n$ pentagons.

\subsection{\underline{\sc{Supertiles of $K$}}}
The vertices of $K$  have either degree 3 or degree 4.
All the faces of $K$ are of course pentagons.  But when we specify the degree on their vertices then there are exactly three choices, namely those shown in Figure \ref{f:prototilesnodec}.
We refer to these three pentagons with specified degree on their vertices by $t_1,$ $t_2$, and $t_3$ as in the figure.
\begin{figure}[h]
  \centering
  \includegraphics[scale=1.1]{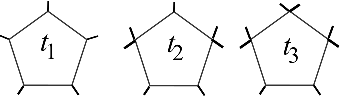}
  \caption{ For $K$, there are only three pentagons with specified vertex degree, namely those shown in the figure. We call these the prototiles of $K$.}\label{f:prototilesnodec}
\end{figure}
Notice that $t_1=K_0$. Let $t$ denote any of the three pentagons $t_1$, $t_2$, $t_3$.
We call $\omega^n(t)$, $n\in\N$ a \emph{superpentagon} of degree $n$.
We call $\omega(t)$ the \emph{flower} of $t$, and the pentagons forming the flower are called \emph{petals}.
The superpentagon $\omega^n(t)$, $n\ge 2$ can always be seen as a superflower composed of six superpentagons (which we call \emph{superpetals}) $\omega^{n-1}(t'_i)$, for some $t'_i\in\{t_1,t_2,t_3\}$ $i=1,\ldots,6$.
Given a superflower, it makes no difference whether we subdivide the superflower first and then recognize its superpetals, or if we subdivide first the superpetals individually and then form the subdivided superflower. See Figure \ref{f:omegapiomegaip}. This observation proves crucial for showing uniqueness of the decorated $K$.
This observation corresponds to  the so called ``local reflections"  in \cite{StephensonBowers97} for $t=t_1=K_0$.
A more obvious observation is that any two superpentagons of same degree are identical except on the ``corners" of each of the two superpentagons.
The degrees of the ``corners" of a superpentagon $\omega^n(t)$ are exactly the degrees of the vertices of $t$. See Figure \ref{f:isomorphicsuperpentagons}.
\begin{figure}[h]
  \captionsetup{width=0.85\textwidth}
  \centering
  \includegraphics[scale=.5]{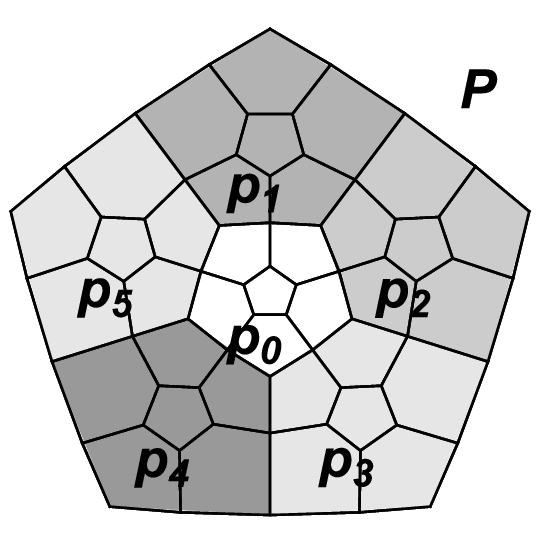} \qquad
  \includegraphics[scale=.5]{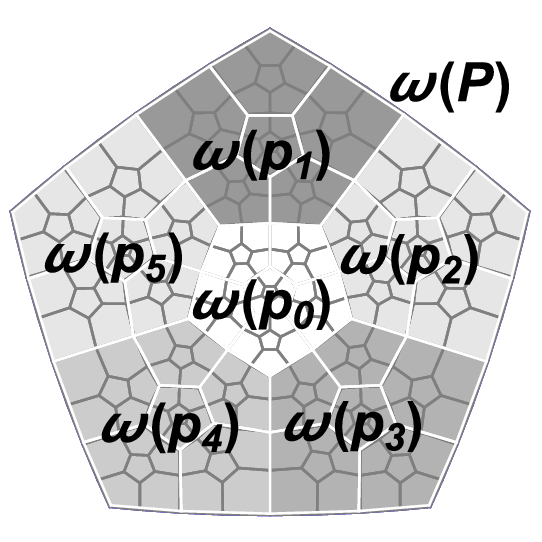}\\ 
  \caption{For a superpentagon $P$, the superpetals of the subdivided superpentagon $\omega(P)$ are the subdivision of the superpetals $p_i$ $i=0,\ldots,5$  of $P$.}\label{f:omegapiomegaip}
\end{figure}
\begin{figure}[ht]
\captionsetup{width=0.85\textwidth}
  \centering
  \includegraphics[scale=.5]{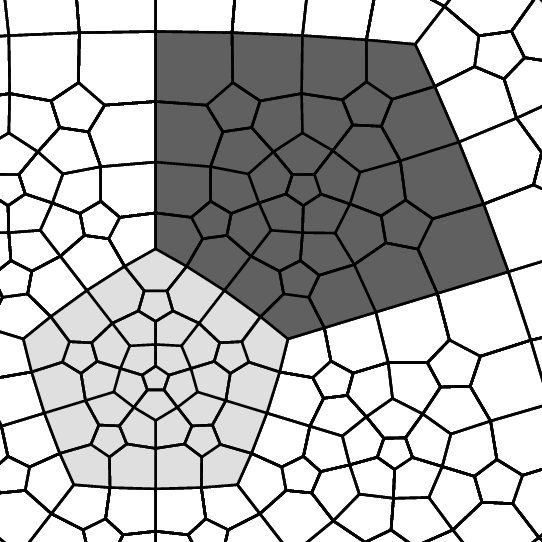}
  \caption{The light-gray superpentagon is isomorphic to the dark-gray superpentagon preserving all vertex degree except those on the ``corners" of the superpentagons. Notice that the light-gray (resp. dark-gray)  superpentagon is $\omega^2(t_1)$ (resp. $\omega^2(t_2)$) cf. Figure \ref{f:prototilesnodec}. The vertex degree of the corners of the superpentagon $\omega^2(t_1)$ (resp. $\omega^2(t_2)$) come from the vertices of $t_1$ (resp. $t_2$). }\label{f:isomorphicsuperpentagons}
\end{figure}

\newpage

\subsection{\underline{\sc{Decorating $K$}}}\label{s:decoratingK}

\begin{defn}[decoration of a pentagon]
The decoration of a pentagon is a bijection from its vertices to $\{1,2,3,4,5\}$ which  appear in increasing order clockwise.
\end{defn}

\begin{defn}[decorated pentagonal tiling]
A decorated pentagonal tiling is a pentagonal tiling where all its pentagons are decorated.
\end{defn}
\begin{figure}[h]
\captionsetup{width=.65\textwidth}
\centering
  \includegraphics[scale=.9]{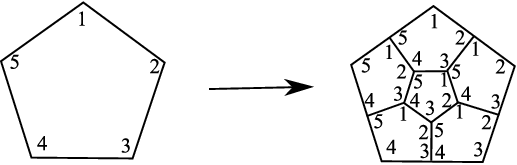}
  \caption{Subdivision rule for a decorated pentagon. We remark that the label 1 in the central pentagon is by choice.}\label{f:tildesubdivision}
\end{figure}
\begin{defn}[subdivision of a pentagonal tiling with decoration]\label{d:subdivisionOfAdecoratedPentagonalTiling}
Given a decorated pentagonal tiling $\fE$, we define the decorated tiling $\omega(\fE)$ by replacing each pentagon of $\fE$ by the subdivision rule with decoration shown in Figure \ref{f:tildesubdivision} (cf. Definition \ref{d:subdivisionOfAPentagonalTiling}).
\end{defn}

\begin{figure}[htbp]
  \begin{minipage}[b]{0.48\linewidth}
  \captionsetup{width=1.0\textwidth}
    \centering
    \includegraphics[width=\linewidth]{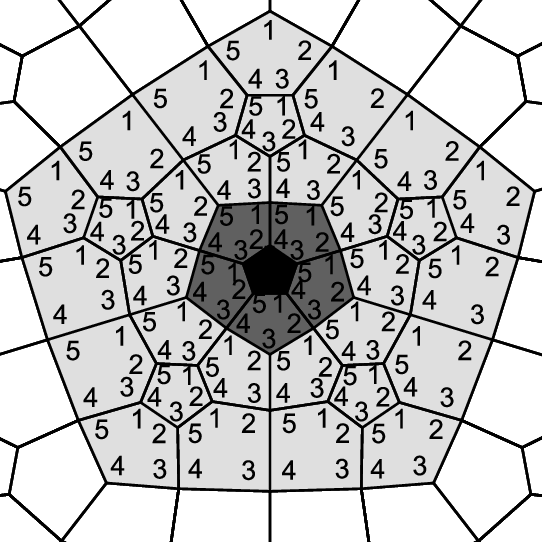}
  \caption{Decorated $K_0$, $K_1$ and $K_2$\\ (cf. Figure \ref{f:Kn}). }\label{f:Kndecorated}
  \end{minipage}
  \hspace{0.2cm}
  \begin{minipage}[b]{0.48\linewidth}
  \captionsetup{width=1.0\textwidth}
    \centering
    \includegraphics[width=\linewidth]{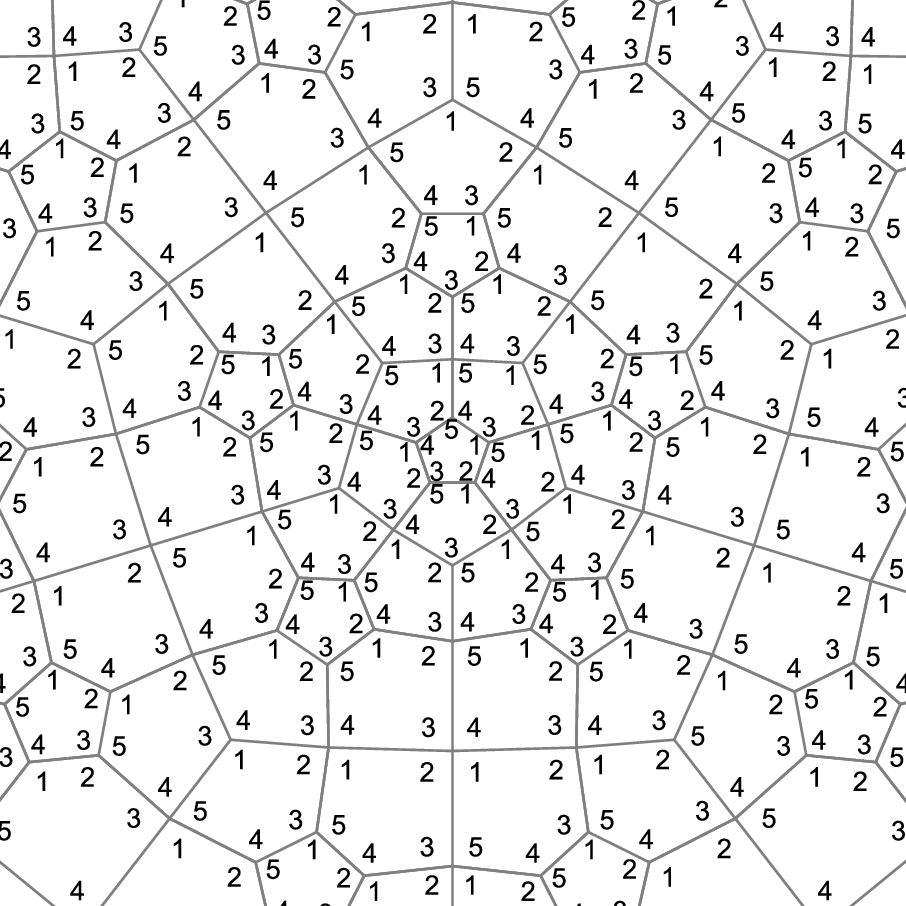}
  \caption{The decorated combinatorial tiling $K$.}\label{f:decoratedK}
  \end{minipage}
\end{figure}

\begin{defn}[Decorated superpentagon \emph{$K_n$}]\label{d:Kndecorated}
Let $K_0$ be a decorated pentagon.
Define the decorated patch $K_n:=\omega^n(K_0)$, $n\in\N_0$. See Figure \ref{f:Kndecorated}.
Notice that one vertex may get different labels from different pentagons which contain it.
Every $K_n$ has a distinguished central pentagon, namely the black pentagon shown in Figure \ref{f:Kndecorated}.
A standard induction argument shows that there is a \emph{unique} embedding  $\iota_n:K_n\to K_{n+1}$ as an embedding which maps the central pentagon of $K_n$ to the central pentagon of $K_{n+1}$ and which preserves decoration. (Note that in Definition \ref{d:Kn} we made a choice, and now we have made a unique embedding).
\end{defn}

\begin{defn}[Decorated \emph{$K$}]\label{d:combinatorialKdecorated}
 Define the complex $$K:=\lim_{n\to\infty} K_n,$$
 where each 2-cell is a decorated pentagon. See Figure \ref{f:decoratedK}. (cf. Definition \ref{d:combinatorialK}).
\end{defn}


Notice that only interior edges and interior vertices are decorated in $K_n$. Eventually all edges and vertices of $K$ are decorated as all edges and vertices become interior.

\begin{thm}\label{t:Ktrivialautomorphism}
  The automorphisms of  decorated $K$ are just the identity map.
\end{thm}
\begin{proof}
Let $\phi$ be an automorphism of $K$ that preserves the decoration.  If we forget that $\phi$ preserves the decoration, then by \cite{StephensonBowers97}, $\phi$ is a rotation with respect to the central pentagon or a reflection with respect to the central pentagon and a vertex $v$. Since the decorated central pentagon has no rotations nor reflections, $\phi$ must be the identity map.
\end{proof}

\begin{figure}[h]
\captionsetup{width=0.65\textwidth}
  \centering
  \includegraphics[scale=.63]{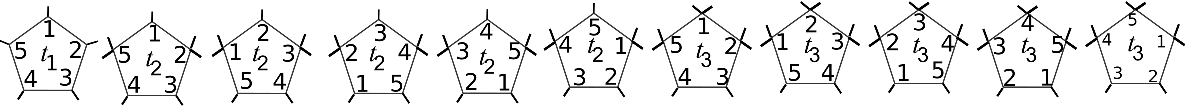}\\
  \caption{The 11 prototiles of decorated $K$. cf. Figure \ref{f:prototilesnodec}.}\label{f:prototilesdec}
\end{figure}

Decorated $K$ has eleven prototiles, i.e. eleven distinct decorated pentagons with specified degree on their vertices,  which are shown in Figure \ref{f:prototilesdec}.

\begin{figure}[h]
  \centering
  \includegraphics[scale=.8]{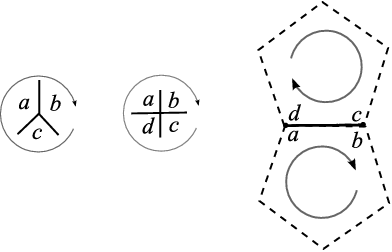}\\
  \caption{Notation of decoration of vertices and edges of decorated $K$. }\label{f:decorationnotation}
\end{figure}

We would like an analogous result of Theorem \ref{t:Ktrivialautomorphism} for certain finite subcomplexes. To do this we introduce decorations on the vertices and edges. These are induced by $K$ and are depicted  in Figure \ref{f:decorationnotation}.
A convenient notation for writing the decoration of the 3-degree vertex depicted in Figure \ref{f:decorationnotation} is $abc$ or $bca$ or $cab$ (notice the cyclic order). We will write the decoration of the 4-degree vertex from Figure \ref{f:decorationnotation} by $abcd$ or $bcda$ or $cdab$ or $dabc$.
The decoration of the edge from Figure \ref{f:decorationnotation} is for convenience written as $(ab,cd)$ or $(cd,ab)$.
 The following lemma lists all possible decorations on the edges and vertices of $K$:

\begin{lem}\label{l:possibledecorationsVE}
There are five decorations for the 3-degree vertices, for the 4-degree vertices, and for the edges of $K$. More precisely,
\begin{itemize}
  \item all the decorations of the 3-degree vertices of $K$ are  135, 124, 235, 134, 245 (notice the cyclic order),
  \item all the decorations of the 4-degree vertices of $K$ are     $1234$, $1245$, $2345$, $1235$, $1345$ (notice the cyclic order).
  \item all the decorations of the edges of $K$ are $(12,34),(12,45),(23,45)$, $(23,51)$, $(34,51)$.
\end{itemize}
\end{lem}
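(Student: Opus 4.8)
\section*{Proof proposal}

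The plan is to determine the three sets of decorations by induction over the subdivision, exploiting that $K=\lim_n K_n$ and that, by Proposition \ref{p:uniquedecorationonK}, the decoration of $K$ is the \emph{unique} one induced by the decorated subdivision rule of Figure \ref{tildesubdivision}. Since every vertex and every edge of $K$ lies in some $K_n$ and every vertex of $K$ is interior, it suffices to show (i) that every decoration occurring in $K$ lies in the listed sets, and (ii) that each listed decoration is realized. Once (i) and (ii) are in place, the count ``five of each'' is immediate.

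First I would reduce the $4$-degree vertices to the edges. Every $4$-degree vertex of $K$ arises, at the stage that creates it, as the midpoint of a subdivided edge, with two fresh sub-pentagons on each side (by Lemma \ref{isomorphicsuperpentagonslem} the subdivision produces new $4$-degree vertices only in this way, and it preserves existing degrees). Its four surrounding labels are exactly the two half-decorations of that edge, so its decoration is the four labels of the edge's two half-decorations written in cyclic order. This gives a bijection between the edge decorations and the $4$-degree decorations — explicitly $(12,34)\mapsto 1234$, $(12,45)\mapsto 1245$, $(23,45)\mapsto 2345$, $(23,51)\mapsto 1235$, $(34,51)\mapsto 1345$ — so it is enough to determine the edge decorations and the $3$-degree vertex decorations.

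For the edges and the $3$-degree vertices I would induct on $n$, the claim being that every edge and every $3$-degree vertex of the decorated $K_n$ carries a decoration from the listed sets; the base case $n=0$ is Lemma \ref{decoratedverticesedgesoftK0}. For the step $K_{n+1}=\omega(K_n)$: existing cells keep their degree and their decoration (Lemma \ref{isomorphicsuperpentagonslem}, together with the copying of decorations in the proof of Proposition \ref{p:uniquedecorationonK}), so they are covered by the inductive hypothesis; each boundary sub-edge of a subdivided edge inherits the parent edge's decoration (the half-decoration $12$ subdivides into $1212$, etc., as in that proof), so it too stays in the list. The only genuinely new cells are the five corners of the new central sub-pentagon — all of degree $3$ by Lemma \ref{vertexdegreeofomegaofapentagon} — and the new interior edges of the flower; reading their decorations directly off Figure \ref{tildesubdivision} shows each new corner is one of the five listed $3$-degree triples and each new edge one of the five listed edge decorations, which closes the induction. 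Realizability (ii) is then read off the same figures: Lemma \ref{decoratedverticesedgesoftK0} exhibits all five $3$-degree triples and all five edge decorations, and all five $4$-degree decorations follow through the bijection above.

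Finally I would record the equivalence of the three descriptions of the edge decorations. An edge decoration is an unordered pair $\{ab,cd\}$, since the two pentagons meeting along the edge play symmetric roles. For each label $\ell\in\{1,\dots,5\}$ there is exactly one listed edge whose two half-decorations omit $\ell$, and it splits the remaining four labels into the same two consecutive pairs under both parametrizations $(i\, \,i{+}1,\,i{+}2\,\,i{+}3)$ and $(i\, \,i{+}1,\,i{+}3\,\,i{+}4)$ (indices mod $5$); hence all three lists equal $\{(12,34),(12,45),(23,45),(23,51),(34,51)\}$. The main obstacle is the bookkeeping in step (i): transcribing the decorated subdivision rule of Figure \ref{tildesubdivision} faithfully and verifying that no new triple, quadruple, or pair outside the listed sets is ever produced. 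Once the figure is read correctly this is a finite check, and it is organized by the cyclic symmetry $i\mapsto i+1$ (indices mod $5$), under which each of the three listed sets is a single orbit — so up to this symmetry there is essentially one case to verify for each cell type.
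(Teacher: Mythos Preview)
Your proposal is correct and follows essentially the same approach as the paper: induction on $n$ using Lemma \ref{decoratedverticesedgesoftK0} as the base case, observing that subdivided edges duplicate their parent's decoration, that the only new $3$-degree vertices and interior edges are those of the inserted decorated $K_0$-copy, and that the $4$-degree vertex decorations are read off from the edge decorations via the midpoint construction. Your treatment is in fact slightly more thorough, since you also address the equivalence of the three listed descriptions of the edge decorations, which the paper leaves implicit.
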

\begin{proof}
The decorations of edges and vertices listed in the lemma appear in $\omega^2(K_0)$.
Since no new decorations appear in $\omega^3(K_0)$, the lemma follows.
\end{proof}

The decoration of an edge tells about the  decoration of the pentagons that have in common the edge.
 The decoration of a vertex tells us as well the  decoration of the pentagons that contains them.

\begin{pro}\label{p:norotationsonpatchesofK}
 Let $v$ be a vertex of decorated $K$ and let $P$ and $Q$  be chain-connected patches  containing $v$.
  If they are isomorphic, where the isomorphism preserves decoration on all cells, and $v$ is mapped to itself then $P=Q$.
\end{pro}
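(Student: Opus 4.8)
The plan is to show that the isomorphism $\phi\colon P\to Q$ with $\phi(v)=v$ is forced to be the identity on $P$; since $P$ and $Q$ are subcomplexes of the \emph{same} decorated complex $K$, this immediately gives $Q=\phi(P)=P$. Throughout I use that $P$ and $Q$ inherit the decoration of $K$ and that $\phi$, being an isomorphism of these decorated subcomplexes, preserves the decoration in the sense that $\phi(j)=j$ for $j=1,\dots,5$ on every cell (exactly as in the preceding automorphism lemma). I also record three local rigidity facts supplied by the decoration. First, by Lemma \ref{l:possibledecorationsVE} the decoration of a vertex lists the interior decorations of the pentagons meeting there, each exactly once; hence in $K$ the face incident to a fixed vertex $w$ and carrying a prescribed interior label at $w$ is \emph{unique}. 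Second, a single pentagon whose five vertices are labelled $1,\dots,5$ has trivial decoration-preserving automorphism group. Third, across any edge of $K$ there is exactly one face on each side.

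First I would treat the base case at $v$. Let $g_1,\dots,g_k$ (with $k\in\{3,4\}$) be the faces of $P$ incident to $v$. Because $\phi$ fixes $v$ and preserves decoration, $\phi(g_i)$ is a face of $Q\subseteq K$ incident to $v$ whose interior label at $v$ equals that of $g_i$; by the uniqueness in the first fact, $\phi(g_i)=g_i$ as faces of $K$. Restricted to the pentagon $g_i$, the map $\phi$ is then a decoration-preserving self-isomorphism fixing $v$, so by the second fact $\phi|_{g_i}=\mathrm{id}$. Thus $\phi$ fixes \emph{pointwise} every face of $P$ incident to $v$, together with all of its vertices and edges.

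Next I would propagate outward by induction along a chain of faces. Suppose $\phi$ fixes a face $f$ of $P$ pointwise and let $f'$ be a face of $P$ sharing an edge $e$ with $f$. Then $\phi$ fixes $e$ and its two endpoints, so $\phi(f')$ is a face of $Q$ lying on the side of $e$ opposite to $f$; by the third fact this face is unique in $K$, whence $\phi(f')=f'$. As before, $\phi|_{f'}$ is a decoration-preserving self-isomorphism of the pentagon $f'$ fixing the edge $e$ and its endpoints, hence the identity. (If the notion of chain-connectedness only guarantees that consecutive faces share a \emph{vertex} $w$, the same conclusion follows by reapplying the base-case argument at $w$, which $\phi$ already fixes.) Since $P$ is chain-connected and has finitely many cells, every face of $P$ is reached from the faces at $v$ by such a chain, so $\phi$ fixes every face of $P$ pointwise; therefore $\phi=\mathrm{id}_P$ and $P=Q$.

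I expect the main obstacle to be the justification that the decoration is genuinely available to us and that it destroys \emph{all} local symmetry, i.e.\ that the interior labels at a vertex are pairwise distinct and therefore single out the incident faces uniquely; this is precisely the content of Proposition \ref{p:uniquedecorationonK} and Lemma \ref{l:possibledecorationsVE}, and it is what rules out the otherwise fatal configuration of two distinct but abstractly isomorphic pentagons meeting at $v$. The remaining care is purely bookkeeping: checking that the propagation step is independent of the chosen chain and that fixing an incident edge (or vertex) together with its labels really forces the identity on each pentagon.
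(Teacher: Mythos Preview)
Your proof is correct and follows essentially the same approach as the paper's: fix the cells incident to $v$ using that the decoration labels at a vertex are pairwise distinct, then propagate outward face by face using that there is no nontrivial reflection across a decorated edge or vertex, and conclude by chain-connectedness and finiteness that $\phi=\mathrm{id}_P$. Your write-up is in fact somewhat more careful than the paper's, in that you explicitly invoke Lemma~\ref{l:possibledecorationsVE} for the uniqueness of the incident face with a given label and you separately treat the edge-adjacency and vertex-adjacency cases in the propagation step.
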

\begin{proof}
 Let $\phi:P\to Q$ be an isomorphism preserving the decoration on all cells, such that $\phi(v)=v$. We call  $v$ a fixed point of $\phi$.
 If a decorated tile shares a decorated edge with a neighbor decorated tile, there is no reflection along this edge because all our decorated edges have distinct numbers on both sides. If a decorated tile shares a decorated vertex with a neighbor decorated tile,  there is no reflection along this vertex because all our decorated vertices have distinct numbers in their decoration.
 Thus since the vertex $v$ is a fixed point of the isomorphism,  the decorated faces edges and vertices having in common this vertex are also fixed by $\phi$, ie. $\phi$ is the identity map on the neighbor vertices edges and faces of $v$. Pick one of the fixed tiles of $P$ and call it $t$. Since the tile $t$ is fixed by $\phi$, and there is no reflections along edges nor vertices, $t$ and its neighbors must also be fixed  by $\phi$, i.e. $\phi$ is the identity map on the neighbor cells of $t$. By a finite induction on the neighbors, $\phi$ is the identity map.
\end{proof}

The following theorem is a corollary from the previous proposition.
\begin{thm}\label{t:autofPareid}
Let $P$ be a simply connected patch of decorated $K$. The only automorphism of $P$ preserving decoration on all cells is the identity map.
\end{thm}
\begin{proof}
Since $P$ is simply-connected, its geometric realization is the closed unit disk.
Let $\phi$ be an automorphism of $P$. By the Brouwer fixed-point theorem, $\phi$ has a fixed point $x_0$, which could either be $(i)$ a vertex, $(ii)$ be in the interior of an edge, or $(iii)$ be in an open 2-cell. In case $(i)$ the theorem  follows immediately from Proposition \ref{p:norotationsonpatchesofK}; in the case $(ii)$ the endpoints of the edge must be fixed as well by Lemma \ref{l:possibledecorationsVE}, so we are back to case $(i)$; and in case $(iii)$ the labelling of the pentagon in question forces the map to fix all the vertices of the pentagon, and we are again back to case $(i)$.
\end{proof}

\section{\textbf{The discrete Hull $\Xi$}}
The theory of $C^*$-algebras and K-theory for aperiodic  Euclidean tilings in $\R^2$ satisfying the  FLC property is well-established (cf. \cite{Sadun08}).
 An aperiodic  FLC Euclidean tiling  gives rise to a compact metric space $(\Omega,d)$ (usually called the \emph{continuous hull}) endowed with a free action of $\R^2$, and so a dynamical system  (cf. pages 5-6 in \cite{solomyak97}, \cite{Mozes89}), and its transformation groupoid $R$ (cf. Remark (ii) after Definition 1.12 of Chapter 2 in \cite{Renault80}).
 According to the Connes-Thom isomorphism, the $K$-theory of the $C^*$-algebra of this groupoid is the $K$-theory of the continuous hull $\Omega$.
 Equivalently, $\Omega$ is the classifying space of the groupoid (c.f. \cite{ConnesClassifyingSpace}) and the Baum-Connes conjecture holds since the groupoid is amenable.
 A natural transversal to this action is called the \emph{discrete hull}  (cf. page 11 in \cite{PutnamCstarKtheory00}), which we denote by $\Xi$.
 The restriction of the groupoid $R$ to $\Xi$ is an \'etale groupoid which is Morita equivalent to $R$. Hence by Theorem 2.8 in \cite{RenaultMoritaGroupoidEquivalencetoCstaralgebras} their $C^*$-algebras are strongly Morita equivalent.
 A substitution tiling is a tiling generated by a substitution rule $\omega$ with scaling factor
 $\lambda > 1$ and a finite number of prototiles, where each prototile is $\lambda$-scaled and substituted
with translation copies of the prototiles. If the substitution is primitive then the dynamical system $(\Omega,d)$ is minimal (ie. every orbit is dense), and we can construct a homeomorphism $\omega:\Omega\to\Omega$. The restriction  $\omega:\Xi\to\Xi$ is injective, continuous, but not surjective. For more details see \cite{PutnamBible95}.

 In the absence of the translation action,
 we show in this section how to construct analogues of the discrete hull for the combinatorial tiling $K$. In \cite{RamirezSolanoPhDThesis} we compute the groupoid for the discrete hull of decorated $K$ (and so a $C^*$-algebra), and analogues of the continuous hull and its topological $K$-theory (also for decorated $K$). At this point however, we have no description of the classifying space nor the groupoid for the continuous hull.

 We remark that this section applies equally to both decorated and non-decorated $K$.
The discrete hull $\Xi$ for the tiling $K$ is a topological space whose elements are basically tilings that look locally the same as $K$.  We make distinctions between elements of this space to the level of vertices, hence the use of the word discrete in the name. Equipping it with an ultrametric $d$, we show it is compact. Moreover, we define a subdivision map on it, which turns out to be  continuous, injective, but not surjective.

\begin{defn}[locally isomorphic]
 A  combinatorial tiling $L$ is \emph{locally isomorphic} to $K$ if for every patch $P$ of $L$ there is a patch $Q$ of $K$ such that $P$ and $Q$ are isomorphic, and for every patch $Q$ of $K$  there is a patch $P$ of $L$ such that $P$ and $Q$ are isomorphic.
\end{defn}
Informally, with $L$ is locally isomorphic to $K$, we mean that any finite piece of $L$ appears somewhere in  a supertile $K_n$, $n\in\N_0$, and vice versa.
Let $v$ be a  vertex of $L$, and $v'$ a vertex of $L'$. We say
$(L,v)$ is isomorphic to $(L',v')$ if there is an isomorphism $\phi:L\to L'$ with $\phi(v)=v'$.
Let $[L,v]_{isom}$ denote isomorphism classes.
The \emph{discrete hull} is defined as the set:
   $$\Xi:=\{[L,v]_{isom} \mid L \text{ is locally isomorphic to $K$, $v\in L$ a vertex}\}.$$
We will see later, (see Remark \ref{r:Recognizable}), that the tilings in the discrete hull are recognizable.
We say that $(L,u)$ is a pointed combinatorial tiling or a combinatorial tiling with origin.
(Similarly, we say that $(P,u)$ is a patch with origin $u$, and $(P,u)$ is isomorphic to $(P',u')$ if
there is an isomorphism $\phi:P\to P'$ with $\phi(u)=u'$.)

Notice that we are replacing the notion of translation $T\to T+x$ by the notion of moving the origin $(L,v)\to(L,v')$.
So periodicity in our case would become $[L,v]_{isom}=[L,v']_{isom}$.

Since any combinatorial tiling is homeomorphic to the plane, and every tiling of the plane is countable, the combinatorial tilings are countable, i.e. has countably many tiles (as each tile can be identified with a point in $\Q^2$ inside the tile).


\subsection{\underline{\sc{The metric space $(\Xi,\lowercase{d})$}}}
Recall that the ball $B(v,n,L)$ on a combinatorial tiling $L$ was introduced in Definition \ref{d:ballonL}.
For decorated $L$ we assume decoration on all cells of the ball $B(v,n,L)$.
\begin{defn}[metric $d$ on $\Xi$]
  Let $d:\Xi\times\Xi\to[0,\infty)$ be given by $$d(\cisom{L,v},\cisom{L',v'}):=\min(\frac 1n,1),$$
where $n\in\N$ is the largest radius, and the two balls $(B(v,n,L),v)\simeq (B(v',n,L'),v')$ are isomorphic.
\end{defn}

Notice that $B(v,0,L)=B(v,1,L)=\emptyset$.
Informally,  $d(\cisom{L,v},\cisom{L',v'})\le 1/n$  means that we can superimpose $(L,v)$ with $(L',v')$ at their origins $v,v'$, and they will agree on a ball of radius at least $n$.

\begin{lem}
  Let $(L,v),(L',v')$ be two combinatorial tilings locally isomorphic to $K$. If $(B(v,n,L),v)\cong (B(v',n,L'),v')$ for every integer $n\ge2$,
  then $(L,v)\cong(L',v')$.
\end{lem}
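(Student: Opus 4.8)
The plan is to construct the isomorphism $(L,v)\cong(L',v')$ as a direct limit of the given ball isomorphisms. The hypothesis gives, for every $n$, an isomorphism $\phi_n:B(v,n,L)\to B(v',n,L')$ that sends $v$ to $v'$. The obvious idea is to take a limit of the $\phi_n$, but the obstacle is that the isomorphisms for different $n$ need not be compatible: nothing in the hypothesis says that $\phi_{n+1}$ restricts to $\phi_n$ on the smaller ball. So the first thing I would do is upgrade the family $\{\phi_n\}$ to a coherent (nested) family.

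The key tool for this coherence is the rigidity established earlier, namely Proposition \ref{p:norotationsonpatchesofK}: if two chain-connected finite subcomplexes of $K$ containing a common vertex are cell-preserving isomorphic via a map fixing that vertex, then they are in fact equal, and more generally (since $L,L'$ are locally isomorphic to $K$) an isomorphism of such patches fixing the distinguished vertex is \emph{unique}. Concretely, I would argue that for $m\le n$ the restriction of $\phi_n$ to $B(v,m,L)$ is an isomorphism onto $B(v',m,L')$ fixing the origin: the ball $B(v,m,L)$ is exactly the set of faces all of whose vertices lie within distance $m$ of $v$, and since $\phi_n$ preserves the combinatorial structure it preserves vertex-vertex distances to the origin, hence carries $B(v,m,L)$ isomorphically onto $B(v',m,L')$. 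By the uniqueness coming from the decoration (no nontrivial automorphisms of patches fixing a vertex, Theorem \ref{t:autofPareid} and Proposition \ref{p:norotationsonpatchesofK}), this restriction must coincide with $\phi_m$. Thus after replacing each $\phi_m$ by the restriction of $\phi_n$, the family becomes genuinely nested: $\phi_{n+1}|_{B(v,n,L)}=\phi_n$.

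With a nested family in hand, I would define $\phi:L\to L'$ cell by cell: every cell of $L$ lies in some ball $B(v,n,L)$ (since any cell has all its vertices at finite distance from $v$, because $L$ is path-connected and every vertex is interior), so set $\phi$ equal to $\phi_n$ there; coherence makes this independent of the choice of $n$. The map $\phi$ is a cell-preserving bijection because each $\phi_n$ is, and it preserves all incidence, degree, and decoration data since each $\phi_n$ does and these are local conditions checkable inside a single ball. Surjectivity follows symmetrically: every cell of $L'$ lies in some $B(v',n,L')$, which is the image of $B(v,n,L)$ under $\phi_n$. Finally $\phi(v)=v'$ by construction, so $\phi$ witnesses $(L,v)\cong(L',v')$.

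The step I expect to be the main obstacle is establishing coherence of the family, i.e. that one may choose the $\phi_n$ to be nested. The uniqueness needed here is precisely what the decoration was introduced to guarantee, so I would lean on Proposition \ref{p:norotationsonpatchesofK}; the subtlety is to verify its hypotheses, namely that $B(v,m,L)$ and $B(v',m,L')$ are chain-connected finite subcomplexes containing the distinguished vertices, which holds because balls are finite and chain-connected (shown earlier for $K$, and transferred to $L,L'$ by local isomorphism). Once coherence is secured the passage to the limit is routine.
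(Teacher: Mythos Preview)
Your argument is correct, and it is genuinely different from the paper's. The paper confronts the same coherence problem you identify---that $\phi_{n+1}$ need not restrict to $\phi_n$---but resolves it by a pigeonhole/diagonal extraction rather than by rigidity: it observes that for each fixed $k$ the restrictions $\phi_n|_{B_k}$ can take only finitely many values (there are only finitely many cell-preserving embeddings of the finite complex $B_k$ into $L'$ sending $v$ to $v'$), extracts a subsequence on which $\phi_n|_{B_1}$ is constant, then a further subsequence on which $\phi_n|_{B_2}$ is constant, and so on, finally defining $\phi$ from the diagonal. Your route is shorter and more conceptual: the decoration was introduced precisely so that Proposition~\ref{p:norotationsonpatchesofK} holds, and once you invoke it (transferred to $L'$ via local isomorphism, as you note) the family $\{\phi_n\}$ is \emph{already} coherent---no extraction is needed. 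The paper's argument, by contrast, would go through even without the decoration, at the cost of an extra compactness step. One small remark: your intermediate claim that $\phi_n$ carries $B(v,m,L)$ onto $B(v',m,L')$ via distance preservation is slightly delicate near the boundary (shortest paths in $L$ might a priori use edges not lying in any face of $B(v,n,L)$), but you do not actually need it: comparing $\phi_n|_{B(v,m,L)}$ and $\phi_m$ as two pointed embeddings of $B(v,m,L)$ into $L'$ and applying the transferred rigidity directly already forces them to agree, and the image identification $\phi_n(B(v,m,L))=B(v',m,L')$ then comes for free.
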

\begin{proof}
If $L$ is decorated, then the lemma is trivial, so assume $L$ is non-decorated.
For short, let $B_n:=B(v,n,L)$ and $B'_n:=B(v',n,L')$.
We have the following inclusions
$$B_2\subset B_3\subset \cdots B_n\subset\cdots\subset L,$$
$$B'_2\subset B'_3\subset \cdots B'_n\subset\cdots\subset L'$$
and the following isomorphisms $\phi_n:B_n\to B'_n$ satisfying $\phi_n(v)=v'$.
Using these maps we need to construct an isomorphism $\phi:(L,v)\to(L',v')$ such that $\phi(v)=v'$.
By definition $\phi_n(B_n)=B'_n$ and $\phi_{n+1}(B_n)\cong B'_n$ as combinatorial isomorphisms are isometric but the latter  might not be equality.
Hence we cannot use all $\phi_n$ to define $\phi$.
However, all balls $\phi_n(B_k)$, $n\in\N$ for fixed $k$ are in $L'$ and are isomorphic to $B'_k$.
Since the types of balls of radius $k$ is finite, a pattern in $\{\phi_n(B_2)\}_{n\in\N}$ must repeat infinitely many times.
Thus we can extract a subsequence $\{\phi_{\alpha_2(n)}\}_{n\in\N}$ such that all the balls $\{\phi_{\alpha_2(n)}(B_2)\}_{n\in\N}$ of radius $2$ are of the same type.
Repeating the same argument, we can extract  a subsequence $\{\phi_{\alpha_3\circ\alpha_2(n)}\}$ such that all the balls $\{\phi_{\alpha_3\circ\alpha_2}(B_2)\}_{n\in\N}$ of radius $2$ are of the same type and all the balls $\{\phi_{\alpha_3\circ\alpha_2}(B_3)\}_{n\in\N}$ of radius $3$ are of the same type.
By induction, we can extract a subsequence $\{\phi_{\alpha_k\circ\cdots\circ\alpha_2(n)}\}$ such that it gives balls of same type of radius $2,\ldots, k$.
We define $\phi$ by $\phi_{\alpha_k\circ\cdots\circ\alpha_2(n)}$,  $k\ge2$.
\end{proof}

\begin{pro}\label{p:dprimeultrametric}
  The metric $d$ on $\Xi$ is an ultrametric.
\end{pro}
\begin{proof}
1) By definition $d$ is positive.
2) We have $d(\cisom{L,v},\cisom{L,v})=0$ as $(L,v)$ agrees on itself on any ball of any radius $n$ centered at $v$.\\
3) If $d(\cisom{L,v},\cisom{L',v'})=0$ then $(B(v,n,L),v)\cong (B(v',n,L'),v')$ for any integer $n$, and therefore by the previous lemma
$\cisom{L,v}=\cisom{L',v'}.$ 4) By definition, $d(\cisom{L,v},\cisom{L',v'})=1/n=d(\cisom{L',v'},\cisom{L,v}).$
5) It remains to show the ultra triangle inequality: $d(x,z)\le \max(d(x,y),d(y,z))$, where $x,y,z\in\Xi$.
Suppose that $d(x,y)=1/n$ and $d(y,z)=1/m$. Then $x$ and $y$ agree on a ball of radius $n$, and $y$ and $z$ agree on a ball of radius $m$.
Hence $x$ and $z$ agree on a ball of radius $\min(x,y)$. Hence $d(x,z)\le 1/\min(m,n)=max(1/n,1/m)=max(d(x,y),d(y,z))$.
Since $\max(1/n,1/m)\le 1/n+1/m$, an ultrametric  is in particular a metric.
\end{proof}

\begin{lem}\label{unionofballsgivesL}
  Let $\{\cisom{L_n,v_n}\}_{n\in\N}$ be a sequence in $\Xi$.
  If $(B(v_n,n,L_n),v_n)\cong (B(v_{n+1},n,L_{n+1}),v_{n+1})$ for all integers $n\ge2$ such that $v_n$ is mapped to $v_{n+1}$,
  then there exists a  $\cisom{L,v}\in \Xi$ such that $(B(v_{n},n,L_{n}),v_{n})\cong (B(v,n,L),v)$ for all integers $n\ge2$ with $v_{n}$ mapped to $v$.
\end{lem}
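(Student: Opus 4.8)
The plan is to realize $(L,v)$ as a direct limit of the given balls and then verify the three things the conclusion demands: that the limit is a genuine combinatorial tiling, that it is locally isomorphic to $K$, and that its own balls recover the prescribed ones. Write $C_n:=B(v_n,n,L_n)$ for $n\ge k$. The hypothesis supplies an isomorphism $C_n\cong B(v_{n+1},n,L_{n+1})$ carrying $v_n$ to $v_{n+1}$, and since $B(v_{n+1},n,L_{n+1})$ is a subcomplex of $C_{n+1}=B(v_{n+1},n+1,L_{n+1})$, this yields a cell-preserving, decoration-preserving, origin-preserving embedding $\psi_n\colon C_n\hookrightarrow C_{n+1}$. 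I would then set $(L,v):=\lim_{n\to\infty}C_n$, the direct limit taken along the $\psi_n$ (in the sense of Definition~\ref{d:combinatorialK}), with $v$ the common image of the centers, and regard each $C_n$ as a subcomplex via $C_n\subset C_{n+1}\subset\cdots\subset L$.

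First I would check that $(L,v)$ is a combinatorial tiling, i.e.\ that its geometric realization is homeomorphic to the open disk. The delicate point is that the finite balls $C_n$ may have boundary and even holes (Figure~\ref{ballholes}), so these defects must be shown to be transient. A vertex $u$ at distance $r$ from $v$ lies in $C_m$ for every $m\ge r$; and since a pentagon has edge-diameter $2$, all faces meeting $u$ have their vertices within distance $r+2$, hence lie in $C_m$ as soon as $m\ge r+2$, so $u$ becomes an interior vertex of $L$. Likewise any hole of some $C_n$ encircles only vertices of finite distance, which are all included once $m$ is large enough, so the hole is filled in $L$. Thus in $L$ every vertex is interior and every $1$-cell bounds exactly two faces, so $L$ is a simply connected, non-compact surface without boundary and therefore homeomorphic to $\R^2\cong\D$. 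Local isomorphism to $K$ is then immediate: any patch $P$ of $L$ is finite and chain-connected, hence contained in some $C_m$; as $C_m$ is a patch of $L_m$ and $L_m$ is locally isomorphic to $K$, the patch $P$ is isomorphic to a patch of $K$. Hence $\cisom{L,v}\in\Xi$.

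It remains to match the balls. I would prove $B(v,n,L)\cong C_n$ by showing that distances in $L$ restricted to $C_n$ agree with the distances used to define $C_n$. The pentagon-diameter estimate is again the engine: along a distance-path $v=w_0,\dots,w_r=u$ with $d_L(v,u)=r\le n-1$, every vertex of a face meeting the edge $\{w_j,w_{j+1}\}$ is within distance $j+2\le n$ of $v$, so that face lies in $C_n$ and the whole path lies in $C_n$. Together with Lemmas~\ref{boundarydistance} and \ref{l:ballincludesdistancesofnminus1}, this shows the selection rule ``all vertices within distance $n$'' picks out exactly the same faces in $L$ as in $L_n$, giving $B(v,n,L)\cong B(v_n,n,L_n)$. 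To make all these identifications simultaneously coherent I would pass, exactly as in the proof of the preceding lemma, to a subsequence $(n_i)$ using the finiteness of the number of ball-types of each radius (Lemma~\ref{l:finitelymanyballs}) and a diagonal argument; this is what the conclusion records as $B(v_{n_i},n,L_{n_i})\cong B(v,n,L)$ for all $n$.

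The step I expect to be the main obstacle is the first verification: proving rigorously that a direct limit of balls, which individually need be neither simply connected nor boundaryless, is homeomorphic to the open disk. One must argue with care that every hole is eventually filled and every vertex eventually becomes interior, so that no puncture or boundary component survives in the limit. By contrast, the distance bookkeeping of the third paragraph is routine once the pentagon-diameter bound is in hand, and the local-isomorphism step is essentially formal.
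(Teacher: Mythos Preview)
Your approach is exactly the paper's: take $(L,v)$ to be the direct limit of the balls $C_n$ along the embeddings induced by the hypothesis, with the canonical CW-structure. The paper's proof is far terser than yours—it simply declares the direct limit, notes the induced CW-structure, and stops, leaving the disk topology, the local isomorphism to $K$, and the ball-recovery statement implicit; your additional verifications are sound and fill in precisely what the paper omits.
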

\begin{proof}
Define the complex $$L=\lim_{n\to\infty} B(v_n,n,L_n),$$
as the direct limit of the sequence of balls and isomorphisms (cf. Definitions \ref{d:combinatorialK}, \ref{d:Kndecorated}, and \ref{d:combinatorialKdecorated}). It has a canonical CW-structure coming from the CW-structure of the complexes $B(v_n,n,L_n)$.
(The ball $B(v_n,n,L_n)$ is obtained from $B(v_{n-1},n-1,L_{n-1})$ by attaching finitely many cells.)
Each cell in the limit $L$ is the image of a cell in $B(v_n,n,L_n)$ for some $n$.
\end{proof}

\begin{pro}\label{p:xicompact}
  The (ultra) metric space $(\Xi,d)$ is compact.
\end{pro}
\begin{proof}
Let $\{[L_i,v_i]_{isom}\}_{i\in\N}$ be a sequence in $\Xi$. We will find a subsequence converging to some $(L,v)\in \Xi$ using a diagonal argument (cf. Lemma 1.1 in \cite{solomyak97}).

For fixed $m$, there are only finitely many distinct balls of radius $m$ by Section \ref{s:1CombinatorialTilings}.
Since $\{B(v_i,2,L_i)\}_{i\in\N}$ is an infinite number of balls of radius $2$, there is a specific type that repeats infinitely many times, say
$\{B(v_{\phi_2(i)},2,L_{\phi_2(i)})\}_{i\in\N}$, where $\phi_2:\N\to\N$ is a strictly increasing map.
Repeating the same argument on the sequence $\{(L_{\phi_2(i)},v_{\phi_2(i)})\}_{i\in\N}$, we can extract a subsequence  $$\{(L_{\phi_3(\phi_2(i))},v_{\phi_3(\phi_2(i))})\}_{i\in\N}$$ such that
all balls of radius $3$ are the same.
The map $\phi_3\circ\phi_2:\N\to\N$ is strictly increasing.
By induction we construct a subsequence
$\{(L_{\phi_n\circ\cdots\circ\phi_2(i)},v_{\phi_n\circ\cdots\circ\phi_2(i)})\}_{i\in\N}$
containing same type of balls of radius $n,n-1,\ldots,2$, where $\phi_n\circ\cdots\circ\phi_2:\N\to\N$ is a strictly increasing map.
Define $\phi(n):=\phi_n\circ\cdots\circ\phi_2(n)$, $n\ge 2$.
Then $\{(L_{\phi_n(n)},v_{\phi(n)})\}_{n\ge2}$ is a sequence containing the same type of balls of radius $m$ when $n\ge m$.
It is also a subsequence of $\{(L_i,v_i)\}_{i\in\N}$  because for $n\ge2$
\begin{eqnarray*}
  \phi(n+1)=\phi_{n+1}\circ\cdots\circ\phi_2(n+1)&>& \phi_{n+1}(\phi_n\circ\cdots\circ\phi_2(n))\\
&\ge&\phi_n\circ\cdots\circ\phi_2(n)=\phi(n).
\end{eqnarray*}
By Lemma \ref{unionofballsgivesL}, there is a $\cisom{L,v}\in \Xi$ such that 
$$(B(v,n,L),v)\cong(B(v_{\phi(n)},n,L_{\phi(n)}),v_{\phi(n)})$$ for all $n\ge 2$.
The subsequence $\{\cisom{L_{\phi(n)},v_{\phi(n)}}\}_{n\ge2}$ converges to $\cisom{L,v}$ because given $N\ge2$, for $n\ge N$  we have
$d(\cisom{L_{\phi(n)},v_{\phi(n)}},\cisom{L,v})\le \frac 1n\le \frac 1N$.
\end{proof}

Since $(\Xi,d)$ is a metric space, it is Hausdorff. Since $(\Xi,d)$ is also compact, it is complete and totally bounded.
By Theorem 1.58 in \cite{ultrametricAppliedAlgebraicDynamics}, every ultrametric space is totally disconnected.
Hence  $(\Xi,d)$  is a pre-Cantor space, i.e. it is compact and totally disconnected.

\newpage
\begin{defn}\label{d:tripentquadpent}
We define the tripent combinatorial tiling $T$ as
$$T:=\lim\limits_{n\to\infty} \omega^n(B(v,2,K)),$$
 with central vertex $v$ of  degree 3.
Also, we define the quadpent combinatorial tiling $Q$ as
$$Q:=\lim\limits_{n\to\infty}\omega^n(B(v',2,K)),$$
 with central vertex $v'$ of  degree 4.
See Figures \ref{f:fixedpointsofw03} and \ref{f:fixedpointsofw04}, respectively.
\end{defn}

Note that the tripent tiling has dihedral symmetry $D_3$, and the quadpent tiling has dihedral symmetry $D_4$.

\begin{figure}[t]
  \begin{minipage}[b]{0.48\linewidth}
  \captionsetup{width=1.0\textwidth}
    \centering
    \includegraphics[width=\linewidth]{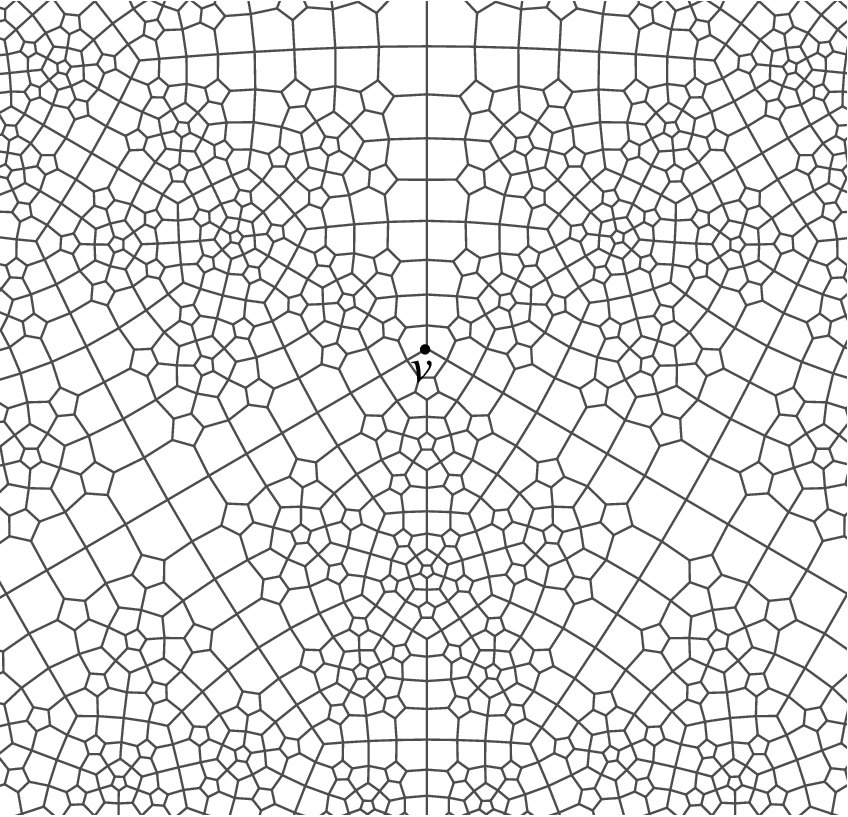}
    \caption{The tripent tiling $T$.}
    \label{f:fixedpointsofw03}
  \end{minipage}
  \hspace{0.2cm}
  \begin{minipage}[b]{0.48\linewidth}
  \captionsetup{width=1.0\textwidth}
    \centering
    \includegraphics[width=\linewidth]{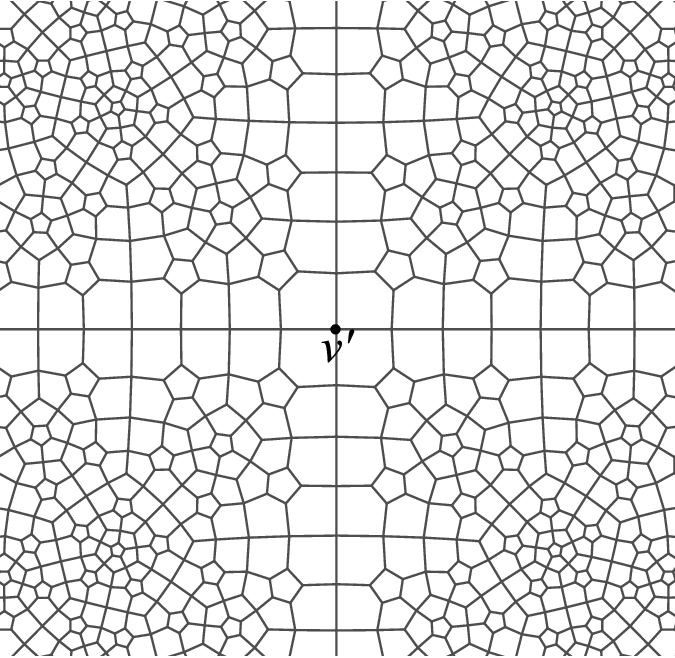}
    \caption{The quadpent tiling $Q$.}
    \label{f:fixedpointsofw04}
  \end{minipage}
\end{figure}

\begin{lem}\label{l:Xitotallydisconnected}
The  space $(\Xi,d)$ has no isolated points.
\end{lem}
\begin{proof}
It suffices to show that for any $\cisom{L,v}\in \Xi$ and any $n\in\N$ there is a $\cisom{L',v'}\in\Xi$ such that $0<d(\cisom{L,v},\cisom{L',v'})\le1/n$.
 Let $n$ and $(L,v)$ be given. Since $L$ is locally isomorphic to $K$, there is a patch in $K$ isomorphic to $B(v,n,L)$.
 Let $v\in K$ also denote the image of vertex $v\in L$ under the isomorphism.
 If $(L,v)\not\cong (K,v)$ then $0<d(\cisom{L,v},\cisom{K,v})\le1/n$.
 If $(L,v)\cong (K,v)$ then instead of $K$ use the tripent tiling $T$ or the quadpent tiling $Q$.
\end{proof}

\begin{thm}\label{t:XiCantor}
  The ultrametric space $(\Xi,d)$ is a Cantor space.
\end{thm}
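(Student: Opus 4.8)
The plan is to invoke the classical topological characterization of the Cantor set due to Brouwer: a topological space is homeomorphic to the standard Cantor set precisely when it is non-empty, compact, perfect (i.e. has no isolated points), totally disconnected, and metrizable. Since $\Xi$ already carries the ultrametric $d'$, metrizability is automatic, and the proof reduces to checking the four remaining properties. The pleasant feature here is that essentially all of the work has been done in the preceding lemmas, so the argument is primarily one of assembly rather than new construction.

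First I would record that $\Xi$ is non-empty, which is immediate since $\cisom{K,v} \in \Xi$ for any vertex $v \in K^0$. Next, compactness is supplied verbatim by Lemma \ref{l:xicompact}, and total disconnectedness by Lemma \ref{lem:xi0totallydisconnected}; I would simply cite these. The text preceding the theorem even packages the combination of compactness and total disconnectedness under the name ``pre-Cantor space,'' so the only genuinely remaining ingredient is perfectness.

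The perfectness of $\Xi$ is exactly what Lemma \ref{l:Xitotallydisconnected} provides, and this is the step I would highlight. That lemma asserts that for every $\cisom{L,v} \in \Xi$ and every $n \in \N$ there exists a distinct class $\cisom{L',v'} \in \Xi$ with $0 < d'(\cisom{L,v},\cisom{L',v'}) \le 1/n$. Reading this across all $n$, every point of $\Xi$ is the limit of a sequence of points of $\Xi$ distinct from it, so no point is isolated; hence $\Xi$ is perfect. Combining non-emptiness, compactness, total disconnectedness, perfectness, and metrizability, the characterization theorem yields that $(\Xi,d')$ is a Cantor space.

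I do not expect any serious obstacle in this argument: the difficulty has been front-loaded into Lemmas \ref{l:xicompact}, \ref{lem:xi0totallydisconnected}, and \ref{l:Xitotallydisconnected}, where the compactness required a diagonal subsequence extraction and the non-isolation relied on the local-isomorphism structure together with the non-fixed-point tiling of Figure \ref{f:fixedpointsofw03}. The remaining task is purely to verify that the five hypotheses of Brouwer's characterization are all in place and to cite them in the correct combination; the only subtlety worth a sentence of care is making explicit that Lemma \ref{l:Xitotallydisconnected} is precisely the statement that $\Xi$ has no isolated points, since its label might misleadingly suggest it concerns total disconnectedness.
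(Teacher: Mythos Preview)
Your proposal is correct and follows essentially the same approach as the paper: the paper's proof simply cites Lemma \ref{l:xicompact} for compactness, Lemma \ref{lem:xi0totallydisconnected} for total disconnectedness, and Lemma \ref{l:Xitotallydisconnected} for the absence of isolated points. You add the explicit appeal to Brouwer's characterization, non-emptiness, and metrizability, which the paper leaves implicit, and you correctly flag that the label of Lemma \ref{l:Xitotallydisconnected} is misleading relative to its content.
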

\begin{proof}
  The ultrametric space $(\Xi,d)$ is compact by Proposition \ref{p:xicompact}. It is totally disconnected by Theorem 1.58 in \cite{ultrametricAppliedAlgebraicDynamics} as $d$ is an ultrametric.
  It has no isolated points by the previous Lemma \ref{l:Xitotallydisconnected}.
\end{proof}

\subsection{\underline{\sc{The subdivision map $\omega$}}}
Recall that the subdivided combinatorial tiling $\omega(L)$  for a (resp. decorated) pentagonal tiling $L$ was introduced in Definition \ref{d:subdivisionOfAPentagonalTiling} (resp.  Definition \ref{d:subdivisionOfAdecoratedPentagonalTiling}).
By construction of $\omega(L)$, every vertex $v$ of $L$ is a vertex of $\omega(L)$. See Figure \ref{f:KvomegaKv}.
Define
$$\omega(L,v):=(\omega(L),v).$$
Define the subdivision map $\omega:\Xi\to\Xi$ by
$$\omega(\cisom{L,v}):=\cisom{\omega(L),v}.$$
This map is well-defined, for if $(L,v)\cong(L',v')$ so is $\omega(L,v)\cong\omega(L',v').$
\begin{figure}[htbp]
  \begin{minipage}[b]{0.48\linewidth}
  \captionsetup{width=1.0\textwidth}
    \centering
    \includegraphics[width=\linewidth]{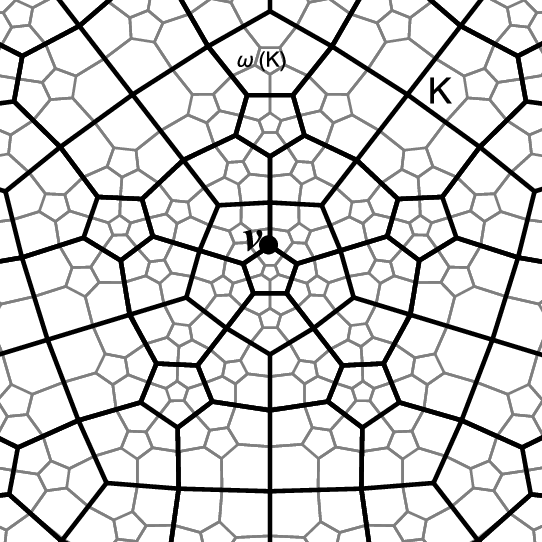}
    \caption{The combinatorial tiling $(K,v)$ is shown in thick lines and the combinatorial tiling $(\omega(K),v)$ is shown in thin lines. Both have in common the vertex $v$.}
  \label{f:KvomegaKv}
  \end{minipage}
  \hspace{0.2cm}
  \begin{minipage}[b]{0.48\linewidth}
  \captionsetup{width=1.0\textwidth}
    \centering
    \includegraphics[width=\linewidth]{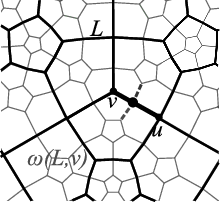}
    \caption{Reconstructing $(L,v)$ (thick) from $\omega(L,v)$ (thin). The neighbor vertex $u\in L$  of $v\in L$ is obtained from the two thick edges of $\omega(L,v)$ ignoring the incoming dotted edges of $\omega(L,v)$.}
    \label{f:w0injective}
  \end{minipage}
\end{figure}

\newpage
\begin{thm}\label{t:omegadiscreteinjective}
  The map $\omega:\Xi\to\Xi$ is injective.
\end{thm}
\begin{proof}
 Suppose that $\omega(L,v)\cong\omega(L',v')$, and let $\phi:\omega(L,v)\to\omega(L',v')$  be the isomorphism.
 We will show that $(L,v)$ is isomorphic to $(L',v')$.
 The idea of the proof is that we can recognize $(L,v)$ from $\omega(L,v)$, and $(L',v')$ from $\omega(L',v')$ in a unique way. Then $\phi$ ``restricted" to $(L,v)$ yields the isomorphism $(L,v)\cong (L',v')$.
 
 Since $v$ is a vertex of both $L$ and $\omega(L)$, and $v'$ is a vertex of both $L'$ and $\omega(L')$, $\phi$ identifies $v\in L$ with $v'\in L'$. 
 Any neighbor vertex of $v\in L$ is obtained in a unique way via $\omega(L)$ as follows:
 1) Start at $v\in \omega(L)$. 2) Go along an edge that has $v\in \omega(L)$ as a vertex. 3) Ignore the incoming edges from both sides and arrive to a new vertex $u\in \omega(L)$. This vertex is also in $u\in L$ and it is neighbor to $v\in L$. The image vertex $u':=\phi(u)\in L'$ is a neighbor vertex of $v'\in L$. (To help the reader follow this argument, see Figure \ref{f:w0injective}.)
 In this way, the map $\phi$ identifies the neighbor vertices, edges and faces of $v$ with those of $v'$.
 By a standard induction argument on the neighbor vertices, edges, and faces, $(L,v)$ is isomorphic to $(L',v')$ via $\phi$.
\end{proof}

\begin{rem}[Recognizability]\label{r:Recognizable}
The second paragraph of the proof of Theorem \ref{t:omegadiscreteinjective} shows that the tilings in the discrete hull are recognizable, i.e. that any tiling breaks into supertiles.
We would also like to point out that it has been observed earlier that injectivity is closely related to recognizability, for example in the Euclidean case see \cite{PutnamBible95}. 
\end{rem}

\begin{pro}
  For both decorated and non-decorated combinatorial tiling $K$ we have $\omega(K)\cong K$, but $\omega(K,v)\not\cong (K,v)$ for each vertex $v\in K$.
\end{pro}
\begin{proof}
By definition of  $\omega(K)$, we have $\omega(K)\cong K$.
The distance of the central pentagon of $K$ to $v$ is not the same as the distance of the central pentagon of $\omega(K)$ to $v\in\omega(K)$, for any $v\in K$.
So $\omega(K,v)\not \cong (K,v)$ for any vertex $v\in K$. This argument is illustrated in Figure \ref{f:KvomegaKv}.
\end{proof}

\begin{pro}\label{p:omegafixedpoints}
  The map $\omega:\Xi\to\Xi$ has fixed points.
\end{pro}
\begin{proof}
The tripent tiling $(T,v)$ and the quadpent tiling  $(Q,v')$, as in Definition \ref{d:tripentquadpent}, are fixed points of $\omega$. i.e. $\omega(T,v)\cong(T,v)$, $\omega(Q,v')\cong(Q,v')$.
\end{proof}


\newpage
\begin{lem}
  If $\gamma\subset K$ is an edge-path of minimal length $n$, then $\omega(\gamma)\subset\omega(K)$ is an edge-path of minimal length $2n$.
\end{lem}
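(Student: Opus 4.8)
The plan is to prove the equivalent statement that the vertex-distance in $\omega(K)$ between the two endpoints of $\omega(\gamma)$ is exactly $2n$. Write $v,w$ for the endpoints of $\gamma$, so their distance in $K$ is $n$. The upper bound is immediate: by the subdivision rule (Figure~\ref{f:subdivisionmapnew}) each edge of $K$ is replaced in $\omega(K)$ by two edges meeting at a fresh $4$-degree vertex, its \emph{midpoint}; hence $\omega(\gamma)$ is an honest edge-path from $\omega(v)$ to $\omega(w)$ of length exactly $2n$. All the content is in showing that no shorter path exists, i.e.\ that $\omega(\gamma)$ is a distance-path.

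First I would classify the vertices of $\omega(K)$ into three kinds, all read off from the flower produced by the subdivision rule together with Lemma~\ref{vertexdegreeofomegaofapentagon}: \emph{old} vertices $\omega(u)$ with $u\in K^0$; \emph{midpoint} vertices lying on a subdivided old edge; and \emph{interior} vertices, the five vertices of the fresh pentagon inserted in each face. The adjacency facts I need are consequences of the flower structure: an old vertex is adjacent only to midpoint vertices; two midpoint vertices are never adjacent (the midpoint of an edge $[u,u']$ has neighbours $\omega(u),\omega(u')$ together with one interior vertex in each of the two adjacent flowers, so it has degree $4$ with no midpoint neighbour); each interior vertex has degree $3$, exactly one of whose neighbours is a midpoint while the other two lie on the same inserted pentagon; and every interior--interior edge stays inside a single flower.

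The key reduction is to show that between two old vertices there is always a distance-path using only old and midpoint vertices, i.e.\ lying in the subdivided $1$-skeleton $\widetilde{K^1}$ obtained from $K^1$ by bisecting every edge. Granting this, the lemma is immediate: in $\widetilde{K^1}$ every old edge has become a length-$2$ arc, so old-vertex distances are exactly doubled, giving $2n$, which matches the length of $\omega(\gamma)$. To prove the reduction I would take any path $\delta$ from $\omega(v)$ to $\omega(w)$ and remove its excursions into interior vertices one at a time without increasing length. Since interior vertices connect only to interior vertices of the same flower and to a single midpoint, every maximal run of interior vertices in $\delta$ lies in one flower and is entered and left through midpoints $m_j,m_k$ of two edges of the \emph{same} pentagon $p$. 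Such an excursion costs $2+\delta_{jk}$ edges, where $\delta_{jk}\in\{0,1,2\}$ is the distance between the two corresponding corners of the inserted cycle, whereas the boundary arc from $m_j$ to $m_k$ around the subdivided pentagon costs $2\delta_{jk}$. Because $2\delta_{jk}\le 2+\delta_{jk}$ for $\delta_{jk}\in\{0,1,2\}$, replacing each interior excursion by the corresponding boundary arc never lengthens the path and, after finitely many steps, yields a path of length $\le\mathrm{length}(\delta)$ lying entirely in $\widetilde{K^1}$.

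The main obstacle is precisely this rerouting step. The inserted central pentagon is an odd cycle (of length five) and the $K$-distances from $v$ to the five boundary edges of a face can vary, so a naive distance potential on $\omega(K)^0$ fails to be $\tfrac12$-Lipschitz across interior edges; indeed genuine interior shortcuts exist for paths that wind \emph{more than halfway} around a single flower. The whole point of the argument is that a geodesic never does this, and it is the clean comparison $2\delta_{jk}\le 2+\delta_{jk}$ --- the ``short way round'' the subdivided pentagon always beats cutting through its centre --- that rules it out. Once the rerouting is in hand, the rest is routine: that $\widetilde{K^1}$ doubles old-vertex distances is elementary, and the decorations play no role in this computation.
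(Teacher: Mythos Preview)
Your argument is correct and considerably more careful than the paper's. The paper disposes of this lemma in two sentences: edges double under $\omega$, and ``the shortest path to reach the endpoints of a subdivided edge is the subdivided edge itself'', from which it declares minimality preserved. That second clause is really a placeholder for the nontrivial content --- precisely the rerouting step you carry out --- since a local fact about one subdivided edge does not by itself forbid shortcuts through the interiors of several neighbouring flowers.

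Your approach is the same in spirit (reduce to the bisected $1$-skeleton $\widetilde{K^1}$, where old-vertex distances exactly double) but you actually justify the reduction: classifying vertices as old/midpoint/interior, observing that interior excursions are trapped in a single flower and bounded by midpoints, and then comparing the interior cost $2+\delta_{jk}$ with the boundary cost $2\delta_{jk}$ via the clean inequality $2\delta_{jk}\le 2+\delta_{jk}$ for $\delta_{jk}\in\{0,1,2\}$. This is exactly the missing step. One tiny sharpening: an interior excursion may wander and have length $\ge 2+\delta_{jk}$ rather than exactly $2+\delta_{jk}$, but your inequality only improves in that case, so the argument is unaffected.
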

\begin{proof}
Since each edge is divided into two edges, $\omega$ doubles the length of any edge-path.
This, together with the fact that the shortest path to reach the endpoints of a subdivided edge is the subdivided edge itself, implies that $\omega$ on a path of minimal length remains a path of minimal length.
\end{proof}
\begin{figure}[htbp]
\captionsetup{width=.5\textwidth}
  \begin{minipage}[b]{\linewidth}
    \centering
    \includegraphics[width=.8\linewidth]{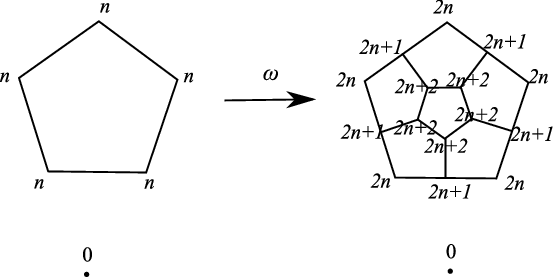}
    \caption{Lengths on a subdivided pentagon.}
    \label{f:lengthsonsubdividedpentagon}
  \end{minipage}
\end{figure}
\begin{lem}\label{l:omegaballsubsetballomega}
For any ball in $K$, we have $$B(v,{2n-2},\omega(K))\subset \omega(B(v,n,K))\subset B(v,2n+2,\omega(K)).$$
\end{lem}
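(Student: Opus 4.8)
The plan is to reduce both inclusions to two elementary facts about a \emph{single} subdivided pentagon, glued together by the previous lemma, which says that $\omega$ exactly doubles the edge-distance between any two vertices coming from $K$; thus $d_{\omega(K)}(\omega(v),\omega(w))=2\,d_K(v,w)$ for all vertices $v,w$ of $K$, and in particular this distance is always \emph{even}. Since a ball is by definition the union of those faces all of whose vertices lie within the prescribed radius, both inclusions can be checked face by face: every face of $\omega(K)$ is a subdivision piece of a unique parent face $t$ of $K$, so I would compare the radius of $\omega(v)$ to such a piece with the radius of $v$ to $t$.

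From the explicit flower subdivision (Figure \ref{f:lengthsonsubdividedpentagon}) I would extract two finite facts. Fact (A): every vertex of a subdivided pentagon $\omega(t)$ lies within edge-distance $2$ of one of its five corners $\omega(w_1),\dots,\omega(w_5)$ (corners are at distance $0$, edge-midpoints at distance $1$, central vertices at distance $2$). Fact (B): every corner of $\omega(t)$ lies within edge-distance $3$ of each of the six subdivision pieces of $\omega(t)$ (distance $\le 2$ to the central piece, and distance $\le 3$ to any petal, the value $3$ being realized by the two corners "opposite" to a petal).

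For the right-hand inclusion I would take a face $F\subset\omega(B(v,n,K))$, so $F$ is a piece of some parent face $t$ of $B(v,n,K)$; then each corner satisfies $d_{\omega(K)}(\omega(v),\omega(w_i))=2\,d_K(v,w_i)\le 2n$, and by Fact (A) every vertex of $F$ is within distance $2$ of some corner, hence within distance $2n+2$ of $\omega(v)$, giving $F\subset B(\omega(v),2n+2,\omega(K))$. For the left-hand inclusion I would take a face $F\subset B(\omega(v),2n-2,\omega(K))$ with parent face $t$; since all vertices of $F$ are within distance $2n-2$ of $\omega(v)$, Fact (B) places each corner $\omega(w_i)$ within distance $3$ of some vertex of $F$, so $d_{\omega(K)}(\omega(v),\omega(w_i))\le(2n-2)+3=2n+1$. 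The goal is then $d_K(v,w_i)\le n$ for every corner, which forces $t\subset B(v,n,K)$ and hence $F\subset\omega(B(v,n,K))$.

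The main obstacle is precisely this last estimate: the bound $2n+1$ is one unit too large to conclude $d_K(v,w_i)\le n$ directly, and no honest improvement of Fact (B) to distance $2$ is available, since the distance-$3$ bound is genuinely attained at the opposite corners of a petal. The resolution is the parity observation from the first paragraph: $d_{\omega(K)}(\omega(v),\omega(w_i))=2\,d_K(v,w_i)$ is even, and an even integer that is $\le 2n+1$ is in fact $\le 2n$, whence $d_K(v,w_i)\le n$ as required. This parity argument is the one genuinely non-routine step; everything else is the bookkeeping of Facts (A) and (B) applied piece by piece.
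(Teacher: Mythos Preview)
Your proof is correct. For the right-hand inclusion your argument coincides with the paper's: both use that the corners $\omega(w_i)$ are at distance $\le 2n$ from $\omega(v)$ (by the doubling lemma) and that every vertex of a subdivided pentagon sits within distance $2$ of some corner (your Fact~(A), the paper's Figure~\ref{f:lengthsonsubdividedpentagon}).

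For the left-hand inclusion you take a genuinely different and more explicit route. The paper argues in two lines: since $B(v,n,K)$ contains all vertices at distance $\le n-1$ (Lemma~\ref{l:ballincludesdistancesofnminus1}), $\omega(B(v,n,K))$ contains all vertices at distance $\le 2n-2$ from $\omega(v)$, hence contains the ball of that radius. This is terse --- the passage from ``contains all vertices at distance $\le 2n-2$'' to ``contains all \emph{faces} whose vertices lie at distance $\le 2n-2$'' is not spelled out. You instead work face by face: given a face $F$ of the small ball with parent tile $t$, your Fact~(B) bounds each corner--to--$F$ distance by $3$, giving $d_{\omega(K)}(\omega(v),\omega(w_i))\le 2n+1$, and then the parity observation (distances between old vertices are even, by the doubling lemma) improves this to $2n$, forcing $t\subset B(v,n,K)$. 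The parity step is the clever part, and it makes your argument self-contained at the level of faces rather than vertices; the paper's version is shorter but leans more heavily on the reader to fill in the face-versus-vertex bookkeeping.
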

\begin{proof}
  We first show that $\omega(B(v,n,K))\subset B(v,2n+2,\omega(K))$. Indeed,
  each vertex of a tile in $B(v,n,K)$ has distance at most  $n$ from the center of the ball. If all vertices of a tile are $n$-distanced, then $\omega$ on this tile will give vertices of distance at most $2n+2$. (This is illustrated in Figure \ref{f:lengthsonsubdividedpentagon}.)  Thus the vertices of each pentagon in $\omega(B(v,n,K))$ will have distance at most $2n+2$  from $v\in\omega(K)$. Hence $\omega(B(v,n,K))\subset B(v,2n+2,\omega(K))$.

  The ball $B(v,n,K)$ contains all vertices of distance $n-1$ and of smaller distance. (Recall that it contains some but not necessarily all vertices of distance $n$).
  Hence $\omega(B(v,n,K))$ contains all vertices of distance $2n-2$ (and of smaller distance) from $v\in\omega(K)$.
  Hence $\omega(B(v,n,K))$ contains the ball of radius $2n-2$ and center $v\in\omega(K)$.
\end{proof}

\begin{thm}\label{t:omegadiscretecontinuous}
The map $\omega$ is continuous.
\end{thm}
\begin{proof}
 By the previous Lemma \ref{l:omegaballsubsetballomega}, if $d(\cisom{L,v},\cisom{L',v'})=1/n$ then $$d(\omega(\cisom{L,v}),\omega(\cisom{L',v'}))\le \frac1{2n-2}.$$
 Hence for $n\ge 3$, i.e. for $d(\cisom{L,v},\cisom{L',v'})\le 1/3$,
 $$d(\omega(\cisom{L,v}),\omega(\cisom{L',v'}))\le \frac34\cdot d(\cisom{L,v},\cisom{L',v'}).$$
 Thus $d$ is continuous.
\end{proof}

\newpage
\begin{thm}\label{t:omegadiscretenotsurjective}
The map $\omega:\Xi\to\Xi$ is not surjective.
\end{thm}
\begin{proof}
Let $\cisom{T,v}$ and $\cisom{Q,v'}$ be the tripent, respectively, quadpent tiling, as in Definition \ref{d:tripentquadpent}, which are fixed points of $\omega$ (cf. Proposition \ref{p:omegafixedpoints}). Define
\begin{eqnarray*}
  &&\Xi_3:=\{\cisom{L,u}\in\Xi\mid \text{ the vertex-degree of $u$ is $3$}\}\\
  &&\Xi_4:=\{\cisom{L',u'}\in\Xi\mid \text{ the vertex-degree of $u'$ is $4$}\}.
\end{eqnarray*}
If $x:=\cisom{L,u}$ is in $\Xi_3$,  then it is easy to see that $B(u,3,\omega^3(L))$ coincides with $B(v,3,T)$. Hence $d(\omega^3(x),\cisom{T,v})\le 1/3$.
Since $\cisom{T,v}$ is a fixed point of $\omega$, we get by the proof of Theorem \ref{t:omegadiscretecontinuous} that 
$$d(\omega^n(x),\cisom{T,v})\le \frac 13\left(\frac 34\right)^{n-3}\le \frac 1n, \qquad n\ge 3.$$
Hence $\omega^n(\Xi_3)\subset B_{1/n}(\cisom{T,v})$ for $n\ge 3$.
In the same way, one gets that 
$\omega^n(\Xi_4)\subset  B_{1/n}(\cisom {Q,v'})$ for $n\ge 3$.
If $\omega$ is surjective, then $w^n$ is a surjection, and since $\omega^n(\Xi_3)\subset \Xi_3$ and $\omega^n(\Xi_4)\subset \Xi_4$, it follows that 
$\omega^n(\Xi_3)=\Xi_3$ and $\omega^n(\Xi_4)=\Xi_4$.
Hence,
\begin{eqnarray*}
 &&\Xi_3\subset \bigcap_{n=3}^\infty B_{1/n}(\cisom{T,v})=\{\cisom{T,v}\} \\
 &&\Xi_4\subset \bigcap_{n=3}^\infty B_{1/n}(\cisom{Q,v'})=\{\cisom{Q,v'}\}.
\end{eqnarray*}
Therefore, $\Xi$ has only two points, which is a contradiction as $\Xi$ has uncountably many elements (it is a Cantor space).
\end{proof}

\begin{rem}[Remark to Proposition \ref{p:omegafixedpoints}]
By the proof of Theorem \ref{t:omegadiscretenotsurjective}, the only fixed points of $\omega$ are exactly the tripent tiling $(T,v)$ and the quadpent tiling $(Q,v')$ (up to decoration of $v$ and $v'$ for decorated $T$ and $Q$).
\end{rem}

\subsection*{\underline{\sc{Acknowledgments}}}
The results of this paper were obtained during my Ph.D. studies at the University of Copenhagen. I would like to express deep gratitude to Ian F. Putnam and my supervisor Erik Christensen. Special thanks go to the referee for several useful comments which helped improve readability of this work.
\bibliographystyle{amsplain}

\end{document}